\newtheorem{teorema}{Theorem}
\newtheorem{lem}[teorema]{Lemma}
\newtheorem{prop}[teorema]{Proposition}
\newtheorem{cor}[teorema]{Corollary}
\newtheorem{athm}{Theorem}
\theoremstyle{definition}
\newtheorem{nota}[teorema]{Remark}
\newtheorem{ex}[teorema]{Example}
\newtheorem{question}[teorema]{Question}
\newcommand{\bcdot}{\boldsymbol{\cdot}}
\DeclareMathOperator{\Aut}{Aut}
\DeclareMathOperator{\Ker}{Ker}
\DeclareMathOperator{\Z}{Z}
\title{On left nilpotent skew braces of class~$2$}
\author{A. Ballester-Bolinches%
\thanks{Departament de Matem\`atiques, Universitat de Val\`encia, Dr.\ Moliner, 50, 46100 Burjassot, Val\`encia, Spain; \texttt{Adolfo.Ballester@uv.es}, \texttt{Vicent.Perez-Calabuig@uv.es}; ORCID 0000-0002-2051-9075, 0000-0003-4101-8656.}
\and L. A. Kurdachenko\thanks{Department of Algebra and Geometry, Oles Honchar Dnipro National University, Dnipro 49010, Ukraine; \texttt{lkurdachenko@gmail.com}; ORCID 0000-0002-6368-7319.}\ \thanks{Part of the research of this author has been carried out in the Departament de Matem{\`a}tiques, Universitat de Val{\`e}ncia; Dr.\ Moliner, 50; 46100 Burjassot, Val\`encia, Spain.} \and V. P\'erez-Calabuig\addtocounter{footnote}{-3}\footnotemark}
\begin{document}
\date{}
\maketitle

\begin{abstract}
The main objective of this article is to initiate a detailed structure theory of left nilpotent skew braces $B$ of class~$2$, i.e. skew braces with $B^3 = 0$. We prove that if $B$ is of nilpotent type, then $B$ is centrally nilpotent. In fact, we show that $B$ is right nilpotent of class at most $2+mr$, i.e. $B^{(2+mr+1)} = 0$, where $m$ and $r$ are the nilpotency classes of the additive group of $B$ and $B^2$, respectively. If $B$ is of abelian type, then $B$ is actually right nilpotent of class~$3$, i.e. $B^{(4)} = 0$, and this bound is best possible. 
  
  \emph{Keywords: skew brace, left nilpotency, right nilpotency, multipermutational level, central nilpotency.}

  \emph{Mathematics Subject Classification (2020):
    16T25, 
    81R50, 
    16N40. 
  }
\end{abstract}

\section{Introduction}
Skew braces were introduced in the context of analysing the class of bijective non-degenerate combinatorial solutions (solutions, for short) of the Yang-Baxter equation, a fundamental equation in mathematical physics with broad implications across various areas. It has become increasingly evident that a deeper understanding of algebraic properties of skew braces is crucial for reveal insights into the classification problem of solutions. Among the various structural properties explored, nilpotency of skew braces has garnered significant attention, particularly due to its connection with a well-studied class of solutions known as multipermutation solutions, or solutions that can be recursively retracted until getting a trivial solution after finitely many steps.

A (left) \emph{skew brace} is a set $B$ with two group structures, $(B,+)$ and $(B,\cdot)$, satisfying the (left) distributivity property: $a \cdot (b+c) = a \cdot b - a + a \cdot c$ for every $a,b,c\in B$. 
If $\mathfrak{X}$ is a class of groups and $(B,+)\in \mathfrak{X}$, $B$ is said to be of \emph{$\mathfrak{X}$-type} (Rump's \emph{braces} introduced in his seminal paper~\cite{Rump07} are skew braces of abelian type).
The nilpotent type universe is particularly noteworthy when studying nilpotency of skew braces: a solution is multipermutation if, and only if, its associated skew brace structure is right nilpotent of nilpotent type (see 
 \cite{CedoJespersKubatVanAntwerpenVerwimp23,CedoSmoktunowiczVendramin19}); and a finite skew brace $B$ of nilpotent type is left nilpotent if, and only if, $(B,\cdot)$ is nilpotent (see \cite{CedoSmoktunowiczVendramin19, Smoktunowicz18-tams}). It is natural, therefore, to ask under which conditions left nilpotency implies right nilpotency, or vice versa.  

Smoktunowicz in~\cite{Smoktunowicz18-tams} made significant strides by 
introducing \emph{strongly nilpotent} skew braces---those that are both left and right nilpotent. In the nilpotent type case, it turns out that strong nilpotency coincides with central nilpotency, a property that can be characterised via the centre of a skew brace (see~\cite{BonattoJedlicka23,JespersVanAntwerpenVendramin23}). Central nilpotency plays a crucial role in the structure theory of skew braces (see~\cite{BallesterEstebanFerraraPerezCTrombetti25-central} for a detailed study), enabling a more tractable analysis for the description of finitely-generated skew braces, a challenging problem which is a long way off from being solved. Some partial results are obtained in the abelian type case: there are  complete descriptions of centrally nilpotent one-generated skew braces with low nilpotency classes (see~\cite{BallesterEstebanKurdachenkoPerezC25-onegenerated, DixonKurdachenkoSubbotin25-preprint, KurdachenkoSubbotin24}, respectively, for right nilpotency class~$2$, left nilpotency class~$2$, and central nilpotency class~$3$).


As it is claimed in~\cite{Smoktunowicz18-tams}, not every right nilpotent skew brace of class~$2$ is centrally nilpotent and not every left nilpotent skew brace of class~$3$ is centrally nilpotent (see \cite[Examples 2 and 3]{Rump07} in the abelian type case). From these examples the following question remained open: let $B$ be a skew brace of nilpotent type which is left nilpotent of class~$2$. Is $B$ centrally nilpotent? Our main result answers this question.

\begin{athm}
\label{teo:A}
Let $B$ be a skew brace of nilpotent type such that $B$ is left nilpotent of class~$2$. Then, $B$ is right nilpotent of class at most $2+mr$, i.e. $B^{(2+mr+1)} = 0$, where $m$ and $r$ are the nilpotency classes of the additive group of $B$ and $B^2$, respectively. In particular, $B$ is centrally nilpotent. 
\end{athm}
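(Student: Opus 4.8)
The plan is to reduce everything to a study of the ideal $B^2$, on which the skew brace structure is almost trivial, and then to show that the operation $z\mapsto z*b$ contracts a suitable filtration of $B^2$ built out of the two lower central series in the statement. First I would record the consequences of $B^3=0$. Since $B^3=B*B^2=0$, the map $\lambda_a$ fixes $B^2$ pointwise for every $a\in B$; as $B^2\subseteq B$ this gives $B^2*B^2=0$, so $(B^2,+)=(B^2,\cdot)$ as groups (the induced skew brace on $B^2$ is trivial) and likewise $B/B^2$ is a trivial skew brace. In particular $(B^2,+)$ is nilpotent of class exactly $r$, the terms $\gamma_j(B^2,+)$ of its lower central series are ideals of $B$, and $\gamma_r(B^2,+)$ is abelian and central in $(B^2,+)$. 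I would then collect the linearisation identities valid modulo $B^3=0$: for $x\in B$, $y\in B^2$ one has $x*(z+y)=x*z$, $(x+y)*z=(y*z)+(x*z)$ and $(x\cdot y)*z=(y*z)+(x*z)$; the map $z\mapsto\lambda_z$ is a group homomorphism $(B^2,+)\to\Aut(B,+)$ whose image consists of automorphisms trivial on $B^2$ and trivial modulo $B^2$; and, by a commutator computation in that image, $[z_1,z_2]*b=\pm\,[\,z_1*b,\,z_2*b\,]$ for $z_1,z_2\in B^2$ (additive commutators), so that $z\mapsto z*b$ carries $\gamma_j(B^2,+)$ into itself.

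Next, because $B*B^2=0$, for $n\ge 2$ one has $B^{(n)}\subseteq B^2$ and $B^{(n+1)}=B^{(n)}*B$ is the additive span of $\{\rho_b(z):z\in B^{(n)},\,b\in B\}$, where $\rho_b(z):=z*b=\lambda_z(b)-b$. For $z\in B^2$ I would derive the normal form $z*b={}^{b}\!\bigl([b,z]_{+}+[z,b]_{\cdot}\bigr)$, where $[b,z]_{+}$ is the additive commutator, $[z,b]_{\cdot}=b^{-1}zb-z$ is the multiplicative commutator regarded as an element of $(B,+)$, both lying in $B^2$, and ${}^{b}(\,\cdot\,)$ is additive conjugation by $b$. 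Thus a single application of $\rho_b$ raises the $\gamma_\bullet(B,+)$‑depth of the additive part by one; the entire difficulty is concentrated in the multiplicative part $[z,b]_{\cdot}$.

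The heart of the proof is then a descent argument, which I would run by induction on $r$. If $r=1$ then $B^2$ is abelian and, combining the normal form with the identities above, one shows $\rho_b$ raises the $\gamma_\bullet(B,+)$‑depth on $B^2$, so $B^{(n+2)}\subseteq\gamma_{n+1}(B,+)\cap B^2$ and $B^{(m+3)}=0$, i.e.\ right nilpotency of class $\le m+2=2+mr$. If $r>1$, pass to the quotient skew brace $B/\gamma_r(B^2,+)$: it is again of nilpotent type, left nilpotent of class $\le 2$, its additive group has class $\le m$, and its second ideal has additive class $\le r-1$, so by induction $B^{(2+m(r-1)+1)}\subseteq N$, with $N:=\gamma_r(B^2,+)$ an abelian ideal. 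It remains to show that inside $N$ the map $\rho_b$ again raises the $\gamma_\bullet(B,+)$‑depth: $z\in N\cap\gamma_i(B,+)\Rightarrow z*b\in\gamma_{i+1}(B,+)$. Granting this, $N$ is annihilated by at most $m$ further applications of $*B$, so that $B^{(2+m(r-1)+1+m)}=B^{(2+mr+1)}=0$. Finally, $B$ is then both left and right nilpotent, hence strongly nilpotent; being of nilpotent type it is centrally nilpotent by the cited results of Smoktunowicz and of Bonatto--Jedli\v{c}ka / Jespers--Van Antwerpen--Vendramin.

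The main obstacle is exactly the depth‑raising statement inside $N=\gamma_r(B^2,+)$. A naive expansion of $b^{-1}zb$ reintroduces $z*b$, making any direct attempt circular; the way out should be to exploit the homomorphism $z\mapsto\lambda_z$ together with the identity $[z_1,z_2]*b=\pm[z_1*b,z_2*b]$ to rewrite $z*b$, for $z\in\gamma_r(B^2,+)$, purely as iterated additive commutators with $b$ — each governed by the additive lower central series of $B$ — using that $\gamma_r(B^2,+)$ is central in $(B^2,+)$. Making this bookkeeping close uniformly in $b$, so as to reach the bound $2+mr$ (in particular to see that the product $mr$, rather than $m+r$, is the correct budget), is where the care is required; the sharpness of the constant $3$ in the abelian‑type case corroborates that a depth increment of one per step is the right mechanism.
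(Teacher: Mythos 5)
Your setup and your normal form for $c\ast b$ with $c\in B^2$ are exactly the paper's starting point (its equation for $c\ast b$ decomposes it into the additive commutator $[-c,b]_+$ plus a conjugate of the multiplicative commutator $[c^{-1},b^{-1}]_{\bcdot}$), and your overall architecture --- peel off $B^2$ in $r$ layers, each layer costing $m$ applications of $\ast B$ --- is also the right one. But the step you yourself flag as "the main obstacle" and then take for granted is a genuine gap, and as you have formulated it it is actually \emph{false}. You claim that on the relevant layer the map $z\mapsto z\ast b$ raises depth in the additive lower central series of $(B,+)$, e.g.\ in the base case $B^{(n+2)}\subseteq\gamma_{n+1}(B,+)\cap B^2$. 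The paper's own second example refutes this: there $B$ is of abelian type, so $\gamma_2(B,+)=0$, yet $B^{(3)}=\langle b\rangle_+\neq 0$. The culprit is precisely the multiplicative-commutator residue $[z^{-1},b^{-1}]_{\bcdot}$ in your normal form: it is an element of $B^2$ with no reason to lie in any deeper term of $\gamma_\bullet(B,+)$, and a naive attempt to control it by additive depth cannot close. (Your own bookkeeping betrays this: $B^{(n+2)}\subseteq\gamma_{n+1}(B,+)$ would give class $m+1$, not the $m+2$ you then state.)

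The missing device is to measure elements of $B^2$ not by where they sit but by where their star-products land. The paper sets $S_n=\Ker\lambda^{(n)}\cap B^2$, where $\lambda^{(n)}$ is the induced action on $B/\Z_n(B^2,+)$, proves each $S_n$ is an ideal, and shows via Lemma~\ref{lema:propietats}(3) that $[c^{-1},b^{-1}]_{\bcdot}\ast a=[b\ast a,c\ast a]_+\in\gamma_2(B^2,+)\subseteq\Z_{r-1}(B^2,+)$, hence $[c^{-1},b^{-1}]_{\bcdot}\in S_{r-1}$. This gives the congruence $c\ast b\equiv[-c,b]_+\pmod{S_{r-1}}$, so $m$ iterations land in $\gamma_{m+1}(B,+)+S_{r-1}=S_{r-1}$, and the descent $B^{(2+mk)}\subseteq S_{r-k}$ runs down to $S_0=\Ker\lambda\cap B^2$; the final $+1$ in $2+mr+1$ is exactly the extra star needed to kill this nonzero kernel layer --- the layer your depth-raising claim would incorrectly force to be zero. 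Your idea of inducting on $r$ via the quotient by $\gamma_r(B^2,+)$ is a legitimate reorganisation (that subgroup is indeed an ideal), but without replacing "membership in $\gamma_{i+1}(B,+)$" by "congruence modulo a kernel-type ideal such as $S_{r-k}$" the inductive step does not go through.
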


\begin{cor}
Every solution of the Yang-Baxter equation with associated skew brace structure of nilpotent type, and left nilpotent of class~$2$, is a multipermutation solution.
\end{cor}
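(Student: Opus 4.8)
The plan is to deduce this corollary directly from Theorem~\ref{teo:A} together with the equivalence, recalled in the introduction, between the multipermutation property for solutions of the Yang--Baxter equation and right nilpotency (of nilpotent type) of the associated skew brace. So I would start by fixing a non-degenerate bijective solution $(X,r)$ whose associated (structure) skew brace $B$ is of nilpotent type and left nilpotent of class~$2$, i.e.\ $B^3 = 0$; being of nilpotent type, $B$ has $(B,+)$ and $(B^2,+)$ both nilpotent, so the bound appearing in Theorem~\ref{teo:A} makes sense.

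The first step is to apply Theorem~\ref{teo:A} to $B$: its hypotheses are exactly those of that theorem, so $B$ is right nilpotent (with the explicit class bound stated there)---indeed centrally nilpotent, although only right nilpotency is needed here---and it is still of nilpotent type.

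The second step is to invoke the characterisation that a solution of the Yang--Baxter equation is a multipermutation solution if and only if its associated skew brace structure is right nilpotent of nilpotent type (see \cite{CedoJespersKubatVanAntwerpenVerwimp23,CedoSmoktunowiczVendramin19}); concretely, the height of the retraction tower $(X,r) \to \Ret(X,r) \to \Ret^2(X,r) \to \cdots$ terminating at a one-element solution is controlled by the right nilpotency class of $B$. Combining this with the first step---$B$ right nilpotent of nilpotent type---gives that $(X,r)$ is a multipermutation solution, which is exactly the assertion. There is essentially no obstacle here beyond Theorem~\ref{teo:A} itself; the only point requiring a little care is to confirm that the skew brace named in the hypothesis is precisely the object to which the cited equivalence applies (the structure skew brace $G(X,r)$, equivalently the skew brace from which $r$ is recovered through its associated $\lambda$- and $\rho$-maps), so that the property of being right nilpotent of nilpotent type transfers faithfully into that of being a multipermutation solution.
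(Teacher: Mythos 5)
Your proposal is correct and is exactly the intended argument: the paper gives no separate proof of this corollary, deriving it immediately from Theorem~\ref{teo:A} together with the equivalence recalled in the introduction that a solution is multipermutation if and only if its associated skew brace is right nilpotent of nilpotent type. Your two steps (apply Theorem~\ref{teo:A} to get right nilpotency, then invoke that characterisation) match the paper's reasoning precisely.
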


\begin{cor}
\label{cor:abeliancase}
Let $B$ be a skew brace of abelian type such that $B$ is left nilpotent of class~$2$. Then, $B$ is right nilpotent of class~$3$, i.e. $B^{(4)} = 0$.
\end{cor}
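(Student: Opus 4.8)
The plan is to deduce the corollary directly from Theorem~\ref{teo:A}, the only work being to evaluate the two parameters $m$ and $r$ in the abelian‑type case. First I would observe that, since $B$ is of abelian type, the group $(B,+)$ is abelian, so its nilpotency class is $m=1$.

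Next I would bound $r$, the nilpotency class of $B^2$. Since $B$ is left nilpotent of class~$2$ we have $B^3=0$; as $B^2\subseteq B$ this gives $B^2 * B^2\subseteq B * B^2=B^3=0$, so the ideal $B^2$ is a trivial skew brace. Hence its additive and multiplicative groups coincide, and this group --- being a subgroup of the abelian group $(B,+)$ --- is abelian; thus $r\le 1$ (with $r=1$ unless $B^2=0$, in which case $B$ is already left nilpotent of class at most~$1$ and the assertion is trivial). Consequently $2+mr=2+r\le 3$, whence $2+mr+1\le 4$.

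Finally I would invoke Theorem~\ref{teo:A}: $B$ is right nilpotent of class at most $2+mr$, that is, $B^{(2+mr+1)}=0$. Since the right series $B^{(1)}\supseteq B^{(2)}\supseteq\cdots$ is a descending chain of ideals and $2+mr+1\le 4$, this forces $B^{(4)}=0$, which is exactly the conclusion of the corollary. There is no genuine obstacle within the corollary itself --- all the difficulty resides in Theorem~\ref{teo:A}; the only point beyond purely formal bookkeeping is the elementary observation that $B^3=0$ makes the ideal $B^2$ a trivial, hence abelian, skew brace, which is what pins down $r\le 1$.
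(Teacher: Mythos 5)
Your proposal is correct and follows essentially the same route as the paper, which simply observes that in the abelian-type case $m=r=1$ and then applies Theorem~\ref{teo:A} to get $B^{(4)}=0$. Your extra remarks (that $B^2$ is a trivial skew brace, and the separate treatment of the degenerate case $B^2=0$) are harmless but not needed: the only point is that $(B^2,+)$ is a subgroup of the abelian group $(B,+)$, so $r\le 1$.
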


\begin{nota}
The nilpotent type hypothesis in Theorem~\ref{teo:A} is necessary as Example~\ref{ex:nonilpotent} below shows. In a forthcoming work we wil describe the structure of finitely generated left nilpotent skew braces of left nilpotent class~$2$.
\end{nota}

\section{Preliminaries}
Let $B$ be a skew brace. The distributivity property between the two group structures in $B$ yields a common identity element $0 \in B$. It also leads to an action given by a homomorphism $\lambda\colon (B,\cdot) \rightarrow \Aut(B,+)$, $a\mapsto \lambda_a$, with $\lambda_a(b) = -a + ab$ for every $a,b\in B$. We write products in $(B,\cdot)$ by juxtaposition and preceding sums. We shall use $[\, ,\,]_+$ and $[\, ,\,]_{\bcdot}$ to respectively denote additive and multiplicative commutators in $(B,+)$ and $(B,\cdot)$. Given a subset $X$ of $B$, $\langle X \rangle_+$ denotes the additively generated subgroup in $(B,+)$. 

Nilpotency concepts of skew braces arose from the so-called \emph{star product} in $B$: $a\ast b = -a + ab - b = \lambda_a(b) - b$ for every $a,b\in B$ (star products act before products and sums). If $X, Y \subseteq B$, then $X \ast Y = \langle x \ast y\mid x \in X,\, y \in Y \rangle_+$. An \emph{ideal} $I$ of $B$ is a normal subgroup $(I,+) \unlhd (B,+)$ such that $I \ast B, B \ast I \subseteq I$; or equivalently, a $\lambda$-invariant normal subgroup $(I,+)\unlhd (B,+)$ such that $(I,\cdot)\unlhd (B,\cdot)$. This yields $b+I = bI$ for every $b \in B$, so that $(B/I,+,\cdot)$ has a quotient skew brace structure.

Following \cite{Rump07}, we can define a left (resp. right) iterated series:
\begin{align*}
(L)\  &B^1 = B \geq B^2 = B \ast B \geq \ldots \geq B^{n+1} = B \ast B^n \geq \ldots\\
(R)\  &B^1 = B \geq B^{(2)} = B \ast B \geq \ldots \geq B^{(n+1)} = B^{(n)} \ast B \geq \ldots
\end{align*}
A skew brace $B$ is said to be \emph{left (resp. right) nilpotent of class $n$} if $n$ is the smallest natural such that $B^{n+1} = 0$ (resp. $B^{(n+1)} = 0$). It is well-known that each term of the right series is an ideal of $B$ ; in particular, $B^2 = B \ast B$ is an ideal of $B$. 

In~\cite{BonattoJedlicka23}, the \emph{centre} of a skew brace $B$ is defined as
\[ \zeta(B) = \{b \in B \mid a \ast b = b \ast a = [a, b]_+ = 0, \ \text{for all $a\in B$}\}\]
Then, $B$ is defined to be \emph{centrally nilpotent} if there exists a chain of ideals
\[0= I_0 \leq I_1 \leq \ldots \leq I_n = B\]
such that $I_j/I_{j-1} \leq \zeta(B/I_{j-1})$ for every $1\leq j \leq n$. Corollary~2.15 in~\cite{JespersVanAntwerpenVendramin23} states that if $B$ is of nilpotent type, then $B$ is centrally nilpotent if, and only if, $B$ is left and right nilpotent.


\begin{nota}
The following property can easily be checked:
\[ (ab) \ast c = a \ast (b \ast c) + b \ast c + a \ast c, \quad \text{for every $a,b,c\in B$.}\]
\end{nota}

The following lemma follows by applying $B^3 = 0$ to the previous property.

\begin{lem}
\label{lema:propietats}
Let $B$ be a skew brace such that $B^3 = 0$. Then:
\begin{enumerate}
\item For every $c \in B^2$ and every $a \in B$, $ac = a+c$ . In particular, $c^{-1} = -c$, for every $c\in B^2$.
\item For every $a,b,x \in B$, $(ab) \ast x = b \ast x + a \ast x$. In particular, $a^{-1}\ast x = -a \ast x$.
\item For every $a,b\in B$, $[a,b]_{\bcdot} \ast x  = [-b \ast x, -a\ast x]_+$.
\item If $c\in B^2$, then $(a+c) \ast x = c \ast x + a \ast x$ for every $a,x \in B$.
\end{enumerate}
\end{lem}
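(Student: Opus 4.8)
The engine of the whole lemma is the identity recorded in the Remark just above, $(ab)\ast c = a\ast(b\ast c) + b\ast c + a\ast c$, together with the single observation that $B^3 = B\ast B^2 = 0$ forces $d\ast w = 0$ for every $d\in B$ and every $w\in B^2$; in particular $a\ast(b\ast c)=0$ for all $a,b,c\in B$, since $b\ast c\in B^2$. My plan is to derive item~2 first, since the remaining items reduce to it.

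For item~2, I would substitute $c=x$ in the displayed identity and delete the vanishing term $a\ast(b\ast x)$, obtaining $(ab)\ast x = b\ast x + a\ast x$ at once. For the ``in particular'' clause, recall that $\lambda\colon(B,\cdot)\to\Aut(B,+)$ is a homomorphism, so $\lambda_0=\mathrm{id}$ and hence $0\ast x = \lambda_0(x)-x = 0$; taking $b=a^{-1}$ in the identity just proved gives $0 = (aa^{-1})\ast x = a^{-1}\ast x + a\ast x$, i.e. $a^{-1}\ast x = -a\ast x$.

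For item~1, if $c\in B^2$ then $a\ast c\in B\ast B^2 = B^3 = 0$, and unwinding $a\ast c = -a+ac-c$ yields $ac = a+c$. Since $B^2$ is an ideal of $B$, it is closed under multiplication and inverses, so $c^{-1}\in B^2$; applying $ac=a+c$ with $a$ replaced by $c$ and $c$ replaced by $c^{-1}$ gives $0 = cc^{-1} = c+c^{-1}$, whence $c^{-1}=-c$. Item~4 is then immediate: for $c\in B^2$, item~1 rewrites the sum $a+c$ as the product $ac$, and item~2 finishes the job, $(a+c)\ast x = (ac)\ast x = c\ast x + a\ast x$. Finally, for item~3 I would write $[a,b]_{\bcdot} = a^{-1}b^{-1}ab$ and iterate item~2 across this four-fold product, substituting $a^{-1}\ast x = -a\ast x$ and $b^{-1}\ast x = -b\ast x$; collecting the four resulting summands in the order that item~2 dictates produces exactly $(b\ast x)+(a\ast x)-(b\ast x)-(a\ast x) = [-b\ast x,\,-a\ast x]_+$.

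None of this is deep. The only step demanding genuine care is item~3: because $(B,+)$ need not be abelian, the order in which the four terms $\pm a\ast x$, $\pm b\ast x$ accumulate must be tracked faithfully and then matched against the sign/ordering conventions for the multiplicative and additive commutators, so that the final expression lands precisely on $[-b\ast x,-a\ast x]_+$ rather than some conjugate of it. A secondary point worth stating explicitly is the use of $c^{-1}\in B^2$ in item~1, which relies only on $B^2$ being an ideal.
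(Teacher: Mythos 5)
Your argument is correct and is precisely the one the paper intends: the lemma is stated there without proof, with only the identity $(ab)\ast c = a\ast(b\ast c)+b\ast c+a\ast c$ cited, and each item does reduce to that identity together with the observation that $B\ast B^2=0$ kills every term $d\ast w$ with $w\in B^2$, exactly as you carry out. The one point worth flagging in item~3 is that your computation uses the conventions $[a,b]_{\bcdot}=a^{-1}b^{-1}ab$ and $[u,v]_+=-u-v+u+v$, whereas the paper's later computations (e.g.\ the expansion of $c\ast b$ in the proof of Theorem~\ref{teo:A}) implicitly use $[a,b]_{\bcdot}=aba^{-1}b^{-1}$ and $[u,v]_+=u+v-u-v$; the stated identity holds under either matched pair of conventions, so nothing is wrong, but the sign/ordering bookkeeping you rightly call delicate does require fixing one such pair consistently.
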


\section{Proof and consequences of Theorem~\ref{teo:A}}

\begin{proof}[Proof of Theorem~\ref{teo:A}]
Since $B^3 = 0$, we see that $B^2 = B \ast B$ is a trivial skew brace, that is $(B^2,+) = (B^2,\cdot)$ in the sense that $cd = c+d$ for every $c,d\in B^2$. Moreover, every subgroup $(X,+)$ of $(B^2,+)$ is $\lambda$-invariant as $b \ast x = 0$ for every $b\in B$ and every $x\in X$.

Assume that $(B,+)$ is nilpotent of class $m\in \mathbb{N}$, i.e. the additive lower central series $\{\gamma_{n,+}(B)\}_{n\in \mathbb{N}}$ of $B$ reaches firstly $0$ at $\gamma_{m+1,+}(B) = 0$. Moreover, assume that $r\in \mathbb{N}$ is the nilpotent class of $(B^2,+)$, i.e. the additive upper central series of $B^2$, $\{\Z_n(B^2,+)\}_{n\in \mathbb{N}}$, reaches firstly $B^2$ at $\Z_{r}(B^2,+) = B^2$. 

Let $n\in \mathbb{N}$. Since $\Z_n(B^2,+)$ is a characteristic subgroup of $(B^2,+) = (B^2,\cdot)$, it holds that $\Z_n(B^2,+) \unlhd (B,+)$ and $\Z_n(B^2,\cdot)\unlhd (B,\cdot)$, as $B^2$ is an ideal of~$B$. Moreover, $\Z_n(B^2,+) \leq (B^2,+)$ is $\lambda$-invariant, and therefore, $\Z_n(B^2,+)$ is an ideal. Thus, we can consider the $\lambda$-action restricted to the quotient $B/\Z_n(B^2,+)$:
\[ \lambda^{(n)}\colon (B,\cdot) \longrightarrow \Aut\Big(B/\Z_n(B^2,+),+\Big),\]
with $\lambda^{(n)}_{a}\big(b+\Z_n(B^2,+)\big) = \lambda_a(b) + \Z_n(B^2,+)$, for every $a,b\in B$.

We claim that for every $n\in \mathbb{N}$, $S_n:= \Ker \lambda^{(n)}\cap B^2$ is an ideal of $B$. We can see that
\begin{align*}
\Ker\lambda^{(n)} & = \{a \in B\mid \lambda_a(b) + \Z_n(B^2,+) = b+\Z_n(B^2,+), \, \text{for all $b\in B$}\} \\
& = \{a \in B\mid a \ast b + \Z_n(B^2,+) = \Z_n(B^2,+), \, \text{for all $b\in B$}\}.
\end{align*}
Since $S_n \subseteq B^2$, $(S_n,\cdot) = (S_n,+)$ is $\lambda$-invariant, and $(S_n,\cdot) \unlhd (B,\cdot)$ as both $(\Ker \lambda^{(n)},\cdot)$ and $(B^2,\cdot)$ are normal subgroups of $(B,\cdot)$. Let $s \in S_n$ and $b\in B$. Certainly, $b+s-b \in B^2$, and 
\[ bs + \Z_n(B^2,+) = (bsb^{-1})b + \Z_n(B^2,+) = bsb^{-1} + b + \Z_n(B^2,+),\]
since $bsb^{-1} \in \Ker \lambda^{(n)}$. Thus, for every $a\in B$, Lemma~\ref{lema:propietats} yields
\begin{align*}
(b+s-b)\ast a + \Z_n(B^2,+) & = (bs - b) \ast a + \Z_n(B^2,+) = \\
& = (bsb^{-1})\ast a + \Z_n(B^2,+) = \\
& = -b\ast a + s \ast a + b \ast a + \Z_n(B^2,\!+) =  \Z_n(B^2,\!+)
\end{align*}
because $s \ast a + \Z_n(B^2,+) = \Z_n(B^2,+)$. Hence, $b+s-b\in S_n$ and the claim holds.

Now, let $c \in B^2$ and $b\in B$. Applying Lemma~\ref{lema:propietats}, it follows that
\begin{align}
c \ast b & = -c + cb - b = -c + bb^{-1}cb -b = -c + b + b^{-1}cb - b =  \nonumber \\
& = -c + b + cc^{-1}b^{-1}cb - b =  -c + b + c + [c^{-1},b^{-1}]_{\bcdot} - b = \nonumber \\
& = [-c,b]_+ + \big[b, [c^{-1},b^{-1}]_{\bcdot}\big]_+ + [c^{-1},b^{-1}]_{\bcdot}  \label{eq:c*b}
\end{align}
Observe that $[c^{-1},b^{-1}]_{\bcdot} \in S_{r-1}$, as $[c^{-1},b^{-1}]_{\bcdot} \in B^2$, and for every $a \in B$, by Lemma~\ref{lema:propietats}, it holds
\[ [c^{-1},b^{-1}]_{\bcdot} \ast a + S_{r-1}= [b \ast a, c \ast a]_+ + S_{r-1} = S_{r-1}\]
because  $c\ast a, b \ast a \in B^2$ and $\Z_r(B^2,+)/\Z_{r-1}(B^2,+) = B^2/\Z_{r-1}(B^2,+)$. Applying~\eqref{eq:c*b}, we get that $c \ast b + S_{r-1} = [-c,b]_+ + S_{r-1}$. Thus, for every $b_1, \ldots, b_m\in B$, we have that
$(\cdots ((c \ast b_1) \ast b_2) \ast \cdots \ast b_{m-1}) \ast b_m + S_{r-1} =$
\[ \Big[-\big[\ldots \big[-[-c,b_1]_+, b_2\big]_+, \ldots, b_{m-1}\big]_+, b_m\Big]_+ + S_{r-1} \in \gamma_{m+1}(B) + S_{r-1} = S_{r-1}.\]
Hence, it follows that $B^{(2+m)}\subseteq S_{r-1}$.

We claim that $B^{(2+mk)} \subseteq S_{r-k}$ for each $1\leq k \leq r$. Assume that it holds for some $1 \leq k-1 < r$, and let $d \in B^{(2+m(k-1))} \subseteq  S_{r-k+1}$ and $b\in B$. Again, by~\eqref{eq:c*b}, it holds that 
\[ d \ast b = [-d,b]_+ + \big[b, [d^{-1},b^{-1}]_{\bcdot}\big]_+ + [d^{-1},b^{-1}]_{\bcdot}.\]
Since $d \in S_{r-k+1}$, it follows that $d \ast a \in \Z_{r-k+1}(B^2,+)$ for every $a \in B$. Thus, $[d^{-1},b^{-1}]_{\bcdot} \in S_{r-k}$ because, by Lemma~\ref{lema:propietats}, 
\[ [d^{-1},b^{-1}]_{\bcdot} \ast a + \Z_{r-k}= [b\ast a, d \ast a]_+ + \Z_{r-k}(B^2,+) = \Z_{r-k}(B^2,+)\]
Thus, $d \ast b + S_{r-k} = [-d,b]_+ + S_{r-k}$, and therefore, 
\[ (\cdots ((d \ast b_1) \ast b_2) \ast \cdots \ast b_{m-1}) \ast b_m + S_{r-k} = S_{r-k},\quad \text{for every $b_1, \ldots, b_m\in B$.}\]
Hence, $B^{(2+mk)} \subseteq S_{r-k}$ and the claim holds. 

In particular, $B^{(2+ m\cdot r)} \subseteq S_0 = \Ker \lambda^{(0)} \cap B^2 = \Ker \lambda \cap B^2$. Hence, $B^{(2+m\cdot r + 1)} = 0$, and $B$ is right nilpotent. Since $B$ is of nilpotent type, 
we conclude that $B$ is centrally nilpotent.
\end{proof}

Let $G$ and $H$ be groups such that $G$ acts on $H$ via a homomorphism $\varphi\colon g\in G \mapsto \lambda_g\in \Aut(H)$. A bijective map $\delta \colon G \rightarrow H$ is a \emph{derivation} associated with $\lambda$, if $\delta(xy) = \delta(x)\lambda_x(\delta(y))$ for every $x,y \in G$. It is well known that the existence of a bijective derivation $\delta\colon (G,\cdot) \rightarrow (B,+)$ associated with a homomorphism $\lambda \colon g \in (G, \cdot) \mapsto \lambda_g\in \Aut(B,+)$ provides a skew brace structure $(B,+,\cdot)$ with the product given by $a\cdot b = \delta(\delta^{-1}(a)\delta^{-1}(b)) = a + \lambda_{\delta^{-1}(a)}(b)$ for every $a,b\in B$.

The following example shows that the nilpotency type hypothesis of Theorem~\ref{teo:A} is necessary.
\begin{ex}
\label{ex:nonilpotent}
Let $(B,+) = \langle \sigma, \tau \mid 3\sigma = 2\tau = 0, \sigma + \tau = \tau + 2\sigma\rangle$ be an additive group isomorphic to the symmetric group of degree~$3$. Let $(G,\cdot) = \langle g \rangle$ be a cyclic group of order $6$. We can define a homomorphism $\varphi \colon G \rightarrow \Aut(B,+)$ given by $\varphi_g(\sigma) = \sigma$ and $\varphi_g(\tau) = \sigma + \tau$. It turns out that $\Ker \lambda = \langle g^3\rangle$, so that $\lambda_{g}^{-1}(\sigma) = \sigma$ and $\lambda_g^{-1}(\tau) = 2\sigma + \tau$. Consider the bijection $\delta \colon (G,\cdot) \rightarrow (B,+)$ given by
\[ 1 \mapsto 0, \quad g \mapsto 2\sigma + \tau, \quad g^2 \mapsto 2\sigma, \quad g^3 \mapsto \tau, \quad g^4\mapsto \sigma, \quad g^5\mapsto \sigma + \tau.\]
It is easy to check that $\delta$ is a derivation associated with $\varphi$, and therefore, it provides a skew brace structure $(B,+,\cdot)$ with non-nilpotent additive group.

Observe that $\lambda_{\sigma} = \lambda_{2\sigma + \tau} = \varphi_g$, $\lambda_{2\sigma} = \lambda_{\sigma + \tau} = \varphi_g^{-1}$, and $\lambda_{\tau} = \lambda_0 = \operatorname{id}_B$. Thus, $\lambda_x(\langle \sigma\rangle + \tau) = \langle \sigma \rangle + \tau$ for every $x \in B$. Recall that $a \ast b = \lambda_a(b) - b$. Hence, it is a routine to check that $B^2 = B^{(2)} = B \ast B = \langle \sigma \rangle_+$, and therefore, $B^3 = B \ast B^2 =  0$ and $B^{(3)} = B^{(2)} \ast B = B^{(2)}$.
\end{ex}

If $B$ is a skew brace of abelian type, then $m = r = 1$ in Theorem~\ref{teo:A}. Therefore, if $B^2= 0$, it follows that $B^{(4)} = 0$. This yields Corollary~\ref{cor:abeliancase}.


The following example shows that $3$ is the best possible upper bound for the right nilpotency class in the abelian type case.

\begin{ex}
Let $(B,+) = \langle a, b \mid 4a = 2b = 0, a+b = b+a\rangle$ be an additive group isomorphic to $C_4\times C_2$. Let $(G,\cdot) = \langle \sigma, \tau \mid \sigma^4= \tau^2= 1, \sigma\tau = \tau\sigma^3 \rangle$ be a dihedral group of order $8$. We can define a homomorphism $\varphi \colon G \rightarrow \Aut(B,+)$ given by 
\[ \begin{array}{ll}
\varphi_\sigma(a) = 3a + b,& \varphi_\tau(a) = a+b,\\
\varphi_\sigma(b) = b, & \varphi_\tau(b) = b,
\end{array}\]
so that $\Ker \varphi = \langle \sigma^2 \rangle$. Consider the bijection $\delta \colon (G,\cdot) \rightarrow (B,+)$ given by
\[ \begin{array}{llll}
1 \mapsto 0, & \sigma \mapsto a+b, & \sigma^2 \mapsto b, & \sigma^3 \mapsto a, \\
\tau \mapsto  2a, & \sigma \tau \mapsto 3a + b, & \sigma^2\tau \mapsto 2a+b, & \sigma^3\tau \mapsto 3a.
\end{array} \]
It is easy to check that $\delta$ is a derivation associated with $\varphi$, and therefore, it provides a skew brace structure $(B,+,\cdot)$ of abelian type.

Observe that $\Ker \lambda = \langle b \rangle$, $\lambda_{a} = \lambda_{a+b} = \varphi_g$, $\lambda_{2a} = \lambda_{2a+b} = \varphi_\tau$, and $\lambda_{3a} = \lambda_{3a+b} = \varphi_\sigma \varphi_\tau = \varphi_\tau \varphi_\sigma$. Thus, the subgroup $\langle 2a, b\rangle_+$ is fixed by the $\lambda$-action. As a consequence, we have that $B^2 = B^{(2)} = B \ast B = \langle 2a,b \rangle_+$, and therefore, $B^3 = B \ast B^2 =  0$. Hence,  we can see that $B^{(3)} = B^{(2)} \ast B = \langle b \rangle_+$, and therefore, $B^{(4)} = B^{(3)}\ast B = 0$.
\end{ex}

The following question naturally arises from Theorem~\ref{teo:A}. 

\begin{question} Does exist a skew brace $B$ of nilpotent type with $B^3 = 0$, $m$ and $r$ the nilpotency classes of $(B,+)$ and $(B^2,+)$, respectively, such that $2+ mr$ is the right nilpotency class of $B$?
\end{question}

The following results show particular cases in which the upper bound for the right nilpotency class can be lower.

\begin{prop}
Let $B$ be a skew brace of nilpotent type such that $B^3 = 0$. If $B^{(k)} \subseteq \Z(B,\cdot)$ for some $k \geq 2$, then $B^{(k+m+1)} = 0$, where $m$ is the nilpotency class of $(B,+)$. In particular, if $B^{(2)}\subseteq \Z(B,\cdot)$, $B^{(2+m+1)} = 0$.
\end{prop}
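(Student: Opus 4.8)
The core idea is that the hypothesis $B^{(k)}\subseteq\Z(B,\cdot)$ makes the multiplicative‑commutator term in the identity~\eqref{eq:c*b} disappear, after which the argument reduces to the nilpotency of $(B,+)$. First I would record the relevant facts about the tail of the right series. For $k\geq 2$ one has $B^{(k)}\subseteq B^{(2)}=B^2$, and since each $B^{(n)}$ is an ideal of $B$ the right series is decreasing, so $B^{(k+j)}\subseteq B^{(k)}$ for every $j\geq 0$. Combined with the hypothesis this gives
\[
B^{(k+j)}\subseteq B^2\cap\Z(B,\cdot)\qquad\text{for all }j\geq 0 .
\]
Consequently, whenever $c\in B^{(k+j)}$ and $b\in B$, the element $c^{-1}$ is central in $(B,\cdot)$, so $[c^{-1},b^{-1}]_{\bcdot}=0$, and \eqref{eq:c*b} collapses to the clean identity $c\ast b=[-c,b]_+$.

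Next I would iterate this identity. Fix $c\in B^{(k)}$ and $b_1,\dots,b_m\in B$, set $x_0=c$ and $x_j=x_{j-1}\ast b_j$ for $1\leq j\leq m$. By the previous step $x_j\in B^{(k+j)}\subseteq B^2\cap\Z(B,\cdot)$ for every $j$, so the identity applies at each stage and yields $x_j=[-x_{j-1},b_j]_+$. Since $x_0=c\in B$, an immediate induction on $j$ gives $x_j\in\gamma_{j+1,+}(B)$; in particular $x_m\in\gamma_{m+1,+}(B)=0$ because $(B,+)$ is nilpotent of class~$m$. Thus every $m$‑fold iterated star product $(\cdots((c\ast b_1)\ast b_2)\ast\cdots)\ast b_m$ with $c\in B^{(k)}$ and $b_i\in B$ vanishes. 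Finally, since all the elements in sight lie in $B^2$, Lemma~\ref{lema:propietats} shows that $d\mapsto d\ast b$ behaves additively on $B^2$ (the reversal of summands coming from part~(4) is harmless for generating additive subgroups), so an induction on $j$ identifies $B^{(k+j)}$ with the additive subgroup generated by the $j$‑fold iterated star products of elements of $B^{(k)}$ against elements of~$B$. Taking $j=m$ and invoking the vanishing just proved gives $B^{(k+m)}=0$, hence a fortiori $B^{(k+m+1)}=0$. The ``in particular'' clause is the case $k=2$, where $B^{(2)}=B^2$.

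The only delicate point, and the one I would be careful to justify, is the stability statement $B^{(k+j)}\subseteq B^2\cap\Z(B,\cdot)$ for all $j$: it is precisely what allows \eqref{eq:c*b} to be re‑applied at every step with the commutator term identically zero, rather than merely zero modulo a term of the upper central series of $B^2$ as in the proof of Theorem~\ref{teo:A}. Everything else is the same commutator bookkeeping already carried out there, only cleaner.
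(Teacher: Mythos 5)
Your proposal is correct and follows essentially the same route as the paper: the centrality of $B^{(k)}$ in $(B,\cdot)$ together with $B^{(k)}\subseteq B^2$ collapses $c\ast b$ to the additive commutator $[-c,b]_+$, and iterating $m$ times lands in $\gamma_{m+1,+}(B)=0$. Your explicit justification of the stability $B^{(k+j)}\subseteq B^2\cap\Z(B,\cdot)$ and of the reduction of $B^{(k+j)}$ to iterated star products via Lemma~\ref{lema:propietats} only makes explicit steps the paper leaves implicit.
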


\begin{proof}
Assume that $m$ is the nilpotency class of $(B,+)$. Let $c\in B^{(k)}$ and let $b\in B$. We can see that
\[ c \ast b = -c + cb - b = -c + bc -b = -c + b + c - b = [-c,b]_+\]
Therefore, for every $b_1, \ldots, b_m \in B$, $( \cdots((c\ast b_1)\ast b_2) \cdots ) \ast b_m = $
\[\Big[-\big[\ldots \big[-[-c,b_1]_+, b_2\big]_+, \ldots, b_{m-1}\big]_+, b_m\Big]_+ \in \gamma_{m+1}(B) =  0\]
i.e. $B^{(k+m+1)} = 0$.
\end{proof}

\begin{cor}
Let $B$ be a skew brace of nilpotent type such that $B^3 = 0$ and $(B,\cdot)$ is abelian. Then, $B^{(2+m+1)} = 0$, where $m$ is the nilpotency class of $(B,+)$.
\end{cor}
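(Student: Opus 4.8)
The plan is to obtain this as an immediate consequence of the preceding Proposition. The key observation is that if $(B,\cdot)$ is abelian, then $\Z(B,\cdot) = B$, so every subset of $B$---in particular the ideal $B^{(2)} = B\ast B$---is contained in $\Z(B,\cdot)$. Applying the Proposition with $k = 2$ then gives $B^{(2+m+1)} = 0$, where $m$ is the nilpotency class of $(B,+)$, which is precisely the claim.

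If one prefers a self-contained argument, one can simply specialise the computation in the proof of the Proposition. For $c\in B^2$ and $b\in B$, commutativity of $(B,\cdot)$ gives $cb = bc$, and then Lemma~\ref{lema:propietats}(1) (valid since $c\in B^2$) lets us pass from products to sums, so that
\[ c\ast b = -c + cb - b = -c + bc - b = -c + b + c - b = [-c,b]_+.\]
Iterating the star product on the right over arbitrary $b_1,\dots,b_m\in B$ then produces an $(m+1)$-fold additive commutator lying in $\gamma_{m+1}(B) = 0$; since $B^{(2+m)}$ is additively generated by such iterated star products of an element of $B^{(2)}\subseteq B^2$ by elements of $B$ (using Lemma~\ref{lema:propietats}(4) for additivity in the left argument, and the fact that $B^{(2)}$ is an ideal so that $[-c,b]_+$ stays in $B^{(2)}$), we conclude $B^{(2+m)}\ast B = B^{(2+m+1)} = 0$.

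There is essentially no obstacle here: the hypothesis that $(B,\cdot)$ is abelian is used only through the identity $\Z(B,\cdot) = B$, and the nilpotent type assumption enters merely to guarantee that $(B,+)$ is nilpotent, so that the class $m$ is well defined---exactly as in the Proposition it specialises.
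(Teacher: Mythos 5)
Your proposal is correct and matches the paper's (implicit) argument exactly: the corollary is stated immediately after the Proposition with no separate proof, precisely because $(B,\cdot)$ abelian gives $B^{(2)}\subseteq B=\Z(B,\cdot)$ and the Proposition with $k=2$ applies. Your self-contained second argument is also sound (and in fact shows the slightly stronger $B^{(2+m)}=0$), but it is just the Proposition's computation specialised.
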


\section*{Acknowledgements}
The first and third authors are supported by the grant CIAICO/2023/007 from the Conselleria d'Educació, Universitats i Ocupació, Generalitat Valenciana. The second author is very grateful to the Conselleria d'Innovaci\'o, Universitats, Ci\`encia i
Societat Digital of the Generalitat (Valencian Community, Spain) and the Universitat de
Val\`encia for their financial support and grant to host researchers affected by the war in
Ukraine in research centres of the Valencian Community. The second author would also like to thank the Isaac Newton Institute for Mathematical Sciences, Cambridge, for support and hospitality during the Solidarity Supplementary Grant Program. This work was supported by EPSRC grant no EP/R014604/1. 

\bibliographystyle{plain}
\bibliography{bibgroup}

\end{document}